\begin{document}
\newtheorem{remark}[theorem]{Remark}
\newtheorem{example}[theorem]{Example}
\renewcommand{\thefootnote}{\fnsymbol{footnote}}

\footnotetext[2]{Corresponding author. Faculty of Mathematical Sciences, University of Guilan, Rasht, Iran\\  Email: khojasteh@guilan.ac.ir, salkuyeh@gmail.com}
\footnotetext[3]{Faculty of Mathematical Sciences, University of Guilan, Rasht, Iran\\  Email: vedalat.math@gmail.com}
\footnotetext[4]{Faculty of Mathematical Sciences, University of Guilan, Rasht, Iran\\  Email: hezari\_h@yahoo.com}

\renewcommand{\thefootnote}{\arabic{footnote}}

\title{A New Preconditioner for the GeneRank Problem}

\author{Davod Khojasteh Salkuyeh \footnotemark[2]\
\and Vahid Edalatpour\footnotemark[3]\
\and Davod Hezari\footnotemark[4]}

\maketitle

\begin{abstract}
Identifying key genes involved in a particular disease is a very important problem which is considered in biomedical research. GeneRank model is based on the PageRank algorithm that preserves many of its mathematical properties. The model brings together gene expression information with a network structure and ranks genes based on the results of microarray experiments combined with gene expression information, for example from gene annotations (GO). In the present study, we present a new preconditioned conjugate gradient algorithm to solve GeneRank problem and study its properties. Some numerical experiments are given  to show the effectiveness of the suggested preconditioner.
\end{abstract}

\begin{keywords} Gene network, Gene ontologies, Conjugate  gradient, Chebyshev, Preconditioner, M-matrix. \end{keywords}

\begin{AMS}65F10, 65F50; 9208, 92D20.\end{AMS}

\pagestyle{myheadings}
\thispagestyle{plain}
\markboth{D. K. Salkuyeh, V. Edalatpour and D. Hezari}{A New Preconditioner for the GeneRank Problem}

\section{Introduction}
\label{SEC1}

Identifying genes involved in a particular disease is regarded as a great challenge in post-genome medical research. Such identification can provide us with a better understanding of the disease. Furthermore, it is often considered as the first step in finding treatments for it. However, the genetic bases of many multifactorial diseases are still uncertain, and modern technologies usually report hundreds or thousands of genes related to a disease of interest.
In this context is where gene-disease prioritization methods are of use.

The act of finding the most potentially successful genes among a variety of listed genes has been defined as the gene prioritization problem. Considering rapid growth in biological data sources containing gene-related information such as, for instance, sequence information, microarray expression data, functional annotation data, protein-protein interaction data, and the biological and medical literature, we can see much interest in recent years in developing bioinformatics approaches that can analyze this data and help with the identification of important genes. The common aim in the present study is to prioritize the genes in a way that those related to the disease under study possibly appear at the top of ranking.

In the last decade, several methods have been proposed for ranking or prioritizing genes by relevance to a disease. Some of these methods have been collected at Gene Prioritization Portal: \verb"http://homes.esat.kuleuven.be/~bioiuser/gpp/index.php". These methods fall into two broad classes. The first class of the methods mostly uses microarray expression data; these methods focus on identifying genes that are differentially expressed in a disease, and use simple statistical measures such as the $t$-statistic or related classification methods in machine learning to rank genes based on this property. The second class of methods which is more general, often making use of a variety of data sources; these methods start with some existing knowledge of `training' genes already known to be related to the disease under study, and directly or indirectly rank the remaining genes based on their similarity to these training genes. There are also some methods that rank or prioritize genes based on their overall likelihood of being involved in some disease in general.

Those kinds of methods that purpose to improve an initial ranking obtained from expression data by augmenting it with a network structure derived from other data sources can be related to the methods of second class. For example, the GeneRank algorithm of Morrison et al. \cite{morison}, is an intuitive change of PageRank algorithm used by the Google search engine that preserves many of its mathematical properties. It combines gene expression information with a network structure derived from gene annotations (gene ontologies (GO)) or expression profile correlations. In the resulting gene ranking algorithm the ranking of genes can be obtained by solving a large linear system of equations. Wu et al. \cite{wu,wup} showed that this is equivalent to a symmetric positive definite linear system of equations and  analyzed its properties. Then they used the conjugate gradient (CG) algorithm in conjunction with a diagonal scaling to solve the system. Their numerical experiments show that it is the most effective among the tested methods. In this paper, we propose another preconditioner for the GeneRank problem and study its properties.

Throughout this paper, we use the following notations, definitions and results. A matrix $A$ is called nonnegative (positive) and is denoted by $A\geq 0$ ($A>0$) if each entry of $A$ is nonnegative (positive). Similarly, for $n$-dimensional vectors, by identifying them with $n \times 1$ matrices, we can also define $x \geq 0$ and $x>0$. For a square matrix $A$, an eigenvalue of $A$ is denoted by $\lambda(A)$ and the smallest and largest eigenvalues of $A$ are given by $\lambda_{\min}(A)$ and  $\lambda_{\max}(A)$, respectively. For any nonsingular symmetric matrix $A$ we have $Cond(A)=\|A\|_2 \|A^{-1}\|_2= {\lambda_{\max}(A)}/{\lambda_{\min}(A)}$ (see \cite{axelson}).

\begin{definition}\label{Zmatrixdef}
A matrix $A=(a_{ij})\in \Bbb{R}^{n\times n}$ is called a $Z$-matrix if for any $i\neq j$, $a_{ij}\leq 0$. An $M$-matrix is a $Z$-matrix with nonnegative inverse.
\end{definition}

\begin{theorem}\label{Mmatrixthm}
\cite{axelson}
A $Z$-matrix $A=(a_{ij})$ is an $M$-matrix if and only if there exists a positive vector $x$, such that $Ax$ is positive.
\end{theorem}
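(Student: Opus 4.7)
The statement is a biconditional, so the plan splits into two independent arguments, and the reverse direction carries the real content.

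For the forward direction I would simply exploit the definition of an $M$-matrix. Assume $A$ is an $M$-matrix, so $A^{-1} \geq 0$ exists. Pick any strictly positive vector $y > 0$ (for example, $y=(1,\ldots,1)^T$) and set $x := A^{-1} y$. Then $Ax = y > 0$ by construction, so only positivity of $x$ itself remains. Since $A^{-1}$ is nonsingular, no row of $A^{-1}$ is identically zero; combined with $A^{-1} \geq 0$ this means every row has at least one strictly positive entry, and pairing such a row with $y > 0$ gives $x_i > 0$ for every $i$.

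The reverse direction is the substantive one. Suppose $A$ is a $Z$-matrix and $x > 0$ satisfies $Ax > 0$. First I would read off from the inequality $(Ax)_i = a_{ii}x_i + \sum_{j\neq i} a_{ij}x_j > 0$ that each diagonal entry $a_{ii}$ is positive, because every off-diagonal term $a_{ij}x_j$ is $\leq 0$ and $x_i > 0$. Thus the diagonal part $D = \mathrm{diag}(a_{11},\ldots,a_{nn})$ is invertible with $D^{-1} > 0$. Writing $A = D - B$ with $B := D - A \geq 0$ (note $B$ has zero diagonal and nonnegative off-diagonal entries because $A$ is a $Z$-matrix) and setting $J := D^{-1}B \geq 0$, the given inequality $Ax > 0$ rearranges to $Jx < x$ componentwise.

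The crux is then to upgrade $Jx < x$ with $x > 0$ to the spectral statement $\rho(J) < 1$. I would use Perron--Frobenius: since $J \geq 0$, there is a nonnegative left eigenvector $y \geq 0$, $y \neq 0$, with $y^T J = \rho(J)\, y^T$. Multiplying $Jx < x$ on the left by $y^T$ gives $\rho(J)\, y^T x = y^T J x < y^T x$, and $y^T x > 0$ because $x > 0$ and $y$ is nonnegative and nonzero, yielding $\rho(J) < 1$. From here $I - J$ is invertible with Neumann expansion $(I-J)^{-1} = \sum_{k\geq 0} J^k \geq 0$, and therefore $A^{-1} = (I-J)^{-1} D^{-1} \geq 0$ as a product of two nonnegative matrices, finishing the proof.

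The only real obstacle is the Perron--Frobenius step, and even that is routine once one has the right left-eigenvector viewpoint; an alternative if one wishes to avoid citing Perron--Frobenius directly is to argue by continuity, noting that $Ax > 0$ persists under small perturbations and that one can show all eigenvalues of $A$ have positive real part via a homotopy from $D$ to $A$. Either way, the argument is short, and the verification that $A^{-1}\geq 0$ follows automatically from the nonnegativity of the Neumann series.
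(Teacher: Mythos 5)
Your proof is correct, but note that the paper itself offers no proof of this statement: it is quoted as a known result from the reference [axelson] (Axelsson's \emph{Iterative Solution Methods}), so there is nothing in the paper to compare against line by line. Your argument is the standard one for this classical characterization of $M$-matrices. Both directions check out: in the forward direction, the observation that a nonsingular nonnegative matrix has a strictly positive entry in every row correctly yields $x = A^{-1}y > 0$; in the reverse direction, the chain $a_{ii}>0$, the regular splitting $A = D - B$ with $J = D^{-1}B \ge 0$, the componentwise inequality $Jx < x$, the left Perron eigenvector argument giving $\rho(J) < 1$ (which is valid for general nonnegative $J$, not only irreducible ones, since $\rho(J)$ always admits a nonnegative left eigenvector), and the Neumann series $(I-J)^{-1} = \sum_{k\ge 0} J^k \ge 0$ together give $A^{-1} = (I-J)^{-1}D^{-1} \ge 0$. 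One small point worth making explicit: the strict inequality $y^{T}Jx < y^{T}x$ holds because $y$ has at least one positive component and $Jx < x$ holds in \emph{every} component, while the remaining components contribute non-strict inequalities; this is exactly where $x>0$ and $Ax>0$ being strict in all coordinates is used. Your sketched alternative via a homotopy from $D$ to $A$ would also work but would need more care to show no eigenvalue crosses the imaginary axis; the Perron--Frobenius route you chose is cleaner and complete as written.
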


\begin{theorem}\label{SPDthm}\cite{stoer}
A Hermitian matrix A is positive definite if and only if all eigenvalues of A are positive.
\end{theorem}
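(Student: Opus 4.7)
The plan is to use the spectral theorem for Hermitian matrices: every Hermitian $A \in \mathbb{C}^{n\times n}$ admits a unitary diagonalization $A = U \Lambda U^{*}$, where $\Lambda = \mathrm{diag}(\lambda_1,\dots,\lambda_n)$ with all $\lambda_i \in \mathbb{R}$, and the columns of $U$ form an orthonormal eigenbasis. With this in hand, both directions reduce to manipulating the quadratic form $x^{*}Ax$.

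For the forward direction, suppose $A$ is positive definite, so $x^{*}Ax > 0$ for every nonzero $x \in \mathbb{C}^{n}$. Given any eigenvalue $\lambda$ of $A$ with corresponding unit eigenvector $v$ (which exists by the spectral theorem), I would compute $v^{*}A v = \lambda v^{*} v = \lambda$. Positive definiteness then forces $\lambda > 0$. Since this argument applies to every eigenvalue, all eigenvalues are positive.

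For the reverse direction, assume every $\lambda_i > 0$. For an arbitrary nonzero $x$, set $y = U^{*} x$, which is also nonzero because $U$ is unitary. Then
\[
x^{*}A x \;=\; x^{*} U \Lambda U^{*} x \;=\; y^{*} \Lambda y \;=\; \sum_{i=1}^{n} \lambda_i |y_i|^2.
\]
Since $y \neq 0$, at least one $|y_i|^2 > 0$, and since each $\lambda_i > 0$, the sum is strictly positive. Hence $A$ is positive definite.

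There is no real obstacle here: the result is classical and the entire proof rests on the spectral theorem, which I would cite rather than re-prove. The only drafting choice is whether to phrase things in terms of individual eigenvectors or via the full diagonalization; I would use the eigenvector form for the easy direction and the diagonalization for the harder direction to keep each step transparent.
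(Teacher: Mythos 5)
Your proof is correct and standard; both directions are handled properly, with the forward direction via $v^{*}Av=\lambda$ for a unit eigenvector and the reverse via the unitary diagonalization. The paper itself gives no proof of this theorem --- it is quoted from the cited reference --- so there is nothing to compare against, but your spectral-theorem argument is exactly the classical one found in such references.
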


The rest of the  paper is organized as follows. In section \ref{SEC2} we introduce the GeneRank problem in details. Section \ref{SEC3} is devoted for presenting our preconditioner. In section \ref{SEC4} we present some numerical experiments to show the effectiveness our preconditioner. Finally, concluding remarks are given in section \ref{SEC5}.

\section{The GeneRank problem formulation}\label{SEC2}

Let the set $G=\{ g_1,...,g_n\} $ be $n$ genes in a microarray. Similar to the idea of PageRank, if a gene is connected with many highly ranked genes, it should be highly ranked as well, even if it may have a low rank from the experimental data. In GO, if two genes share at least one annotation with other genes, they are defined to be connected. From this idea, we can build a gene network whose adjacent matrix is $W$, with entries
\begin{eqnarray}
\label{p11}
W_{ij}=
\left\{
  \begin{array}{ll}
    1, & ~~~g_i~\textrm{and}~g_j~(i \neq j )~\textrm{have}~\textrm{the}~\textrm{same}~\textrm{annotation}~\textrm{in}~\textrm{GO},\\
    0, &~~~ \textrm{otherwise}.
  \end{array}
\right.
\end{eqnarray}
In contrast to PageRank, the connections are not directed. Thus, instead of a nonsymmetric hyperlink matrix, GeneRank considers the symmetric adjacency matrix W of the gene network, i.e., $W^T = W$. Let
\[
deg_i=\sum_{j=1}^n w_{ij}
\]
Note that since a gene might not be connected to any of the other genes, $W$ may have zero rows. Now, suppose that the diagonal matrix $D$ is defined by $D=\textrm{diag} (d_1, \ldots , d_n)$, where
\begin{eqnarray}
\label{wdeg}
d_i=
\left\{
\begin{array}{ll}
deg_i, &~~~ deg_i>0,\\
1,  &~~~\textrm{otherwise}.
\end{array}
\right.
\end{eqnarray}
Then the GeneRank problem can be written as the following large scale nonsymmetric linear system (Morrison et al. \cite{morison})
\begin{eqnarray}
\label{morison}
(I-\alpha W D^{-1})x=(1-\alpha) ex,
\end{eqnarray}
where $I$ denotes the $n\times n$ identity matrix. Also $\alpha$ is the damping factor with $0 < \alpha < 1$, and $ex = [ex_1, ex_2,\ldots , ex_n]^T$ , with $ex_i\geq 0$, is the absolute value of the expression change for $g_i$, $i = 1, 2, \ldots , n$. The solution vector $x$ is called GeneRank vector, and its entries provide information about the significance of a gene.  Morrison et al suggested using $\alpha =0.5$. However, the optimal choice of $\alpha$ is still an interesting topic and deserves further study.

Note that $W$ is a sparse matrix in general. Morrison et al. \cite{morison}, used a direct method  for the computation of GeneRank vector that is extremely inefficient when the problem is very large or $\alpha$ is very close to 1. Yue et al \cite{yue}, reformulated the GeneRank model as a linear combination of three parts, and presented an explicit formulation for the GeneRank vector. In Wu et al \cite{wu}, the GeneRank problem was rewritten as a large scale eigenvalue problem, and it was solved by some Arnoldi-type algorithms. In \cite{wup}, Wu and coworkers proved that the matrix $D-\alpha W$ is symmetric positive definite matrix and showed that the nonsymmetric linear system (\ref{morison}) can be rewritten as the following symmetric positive definite (SPD) linear system
\begin{eqnarray}
\label{spd}
(D-\alpha W)\hat{x}=(1-\alpha) ex,
\end{eqnarray}
with $\hat{x} = D^{-1}x$. Note that equation (\ref{spd}) is equivalent to  equation (\ref{morison}). With this modification, methods that are suitable for symmetric systems, can be used for the GeneRank problem. They implemented the Jacobi preconditioner (symmetric diagonal scaling) on $D -\alpha W$. In that case, the preconditioned system is given by
\begin{eqnarray}
\label{pcg}
(I-\alpha D^{-\frac{1}{2}} W D^{-\frac{1}{2}})\bar{x}=(1-\alpha) D^{-\frac{1}{2}} ex,
\end{eqnarray}
with $\bar{x}= D^{\frac{1}{2}} \hat{x} =  D^{\frac{1}{2}} (D^{-1}x) =  D^{-\frac{1}{2}} x$.\\
\hspace*{.5cm}
Wu and coworkers \cite{wup} also showed that the eigenvalues of the preconditioned matrix are bounded as follows:
\begin{eqnarray}
\label{bound1}
\lambda_{\max} (I-\alpha D^{-\frac{1}{2}} W D^{-\frac{1}{2}})\leq 1+\alpha ,\\
\label{bound2}
\lambda_{\min} (I-\alpha D^{-\frac{1}{2}} W D^{-\frac{1}{2}})\geq 1-\alpha.
\end{eqnarray}
These bounds are independent of the size of the matrix, and they only depend on the value of a parameter used in the GeneRank model.

Recently, Benzi and Kuhlemann in \cite{benzi} showed that coefficient matrices (\ref{morison}) and (\ref{spd}) have a nice property that we introduce it here:
\begin{theorem}\label{GeneRankMthm}
Both of the matrices $D-\alpha W$ and $I+\alpha D^{-\frac{1}{2}} W D^{-\frac{1}{2}}$ are M-matrix for $0<\alpha < 1$.
\end{theorem}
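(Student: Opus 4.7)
The plan is to verify both matrices are Z-matrices and then apply Theorem \ref{Mmatrixthm} by exhibiting an explicit positive vector $x$ such that the product is positive.

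First I would handle $A := D - \alpha W$. Since $W$ is the adjacency matrix of (\ref{p11}) it is nonnegative with zero diagonal, while $D$ is a positive diagonal matrix by (\ref{wdeg}); hence the off-diagonal entries of $A$ are $-\alpha w_{ij} \leq 0$, so $A$ is a Z-matrix. To apply Theorem \ref{Mmatrixthm} I would test the all-ones vector $e = (1,\dots,1)^T > 0$. Computing entrywise,
\begin{equation*}
\bigl((D-\alpha W)e\bigr)_i \;=\; d_i - \alpha\,\deg_i.
\end{equation*}
A split on the definition of $d_i$ finishes the step: if $\deg_i>0$ then $d_i=\deg_i$ and the $i$-th component equals $(1-\alpha)\deg_i>0$; if $\deg_i=0$ then $d_i=1$ and the component equals $1>0$. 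Thus $(D-\alpha W)e>0$, and Theorem \ref{Mmatrixthm} delivers the M-matrix property.

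For the second claim I read the matrix as $I - \alpha D^{-1/2} W D^{-1/2}$ (this matches the preconditioned system (\ref{pcg}); as written in the statement with a ``$+$'' the matrix has nonnegative off-diagonals and cannot be a Z-matrix, so I take this to be a typographical sign). The quickest route is to exploit what I have already proved: since $D^{-1/2}$ is a positive diagonal matrix,
\begin{equation*}
I - \alpha D^{-1/2} W D^{-1/2} \;=\; D^{-1/2}(D-\alpha W)D^{-1/2},
\end{equation*}
so the Z-matrix pattern is preserved (diagonal scaling by positive entries does not flip signs), and its inverse is $D^{1/2}(D-\alpha W)^{-1}D^{1/2}$, which is nonnegative because $(D-\alpha W)^{-1}\geq 0$ by the first part and $D^{1/2}\geq 0$. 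Alternatively, I can apply Theorem \ref{Mmatrixthm} directly with the positive test vector $x = D^{1/2} e$, since
\begin{equation*}
\bigl(I - \alpha D^{-1/2} W D^{-1/2}\bigr) D^{1/2} e \;=\; D^{-1/2}\bigl((D-\alpha W)e\bigr) \;>\; 0
\end{equation*}
by the same case analysis as before.

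I do not anticipate a real obstacle here: once one notices that $e$ (resp.\ $D^{1/2}e$) is the natural certifying vector, everything reduces to the dichotomy in (\ref{wdeg}). The only subtlety worth flagging in the write-up is the handling of isolated genes ($\deg_i=0$), where $d_i$ is set to $1$ by convention; without that convention $D$ would be singular and the argument for positivity of the $i$-th component would fail. All other steps are bookkeeping.
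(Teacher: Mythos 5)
Your proof is correct. Note, however, that the paper itself offers no proof of this statement: it is quoted verbatim from Benzi and Kuhlemann \cite{benzi}, so there is no internal argument to compare against. Your route is the natural one and is essentially the standard diagonal-dominance argument in disguise: exhibiting $e$ as a certifying vector for $D-\alpha W$ amounts to checking that each row has diagonal entry $d_i$ exceeding the off-diagonal row sum $\alpha\,\deg_i$, with the $\deg_i=0$ convention $d_i=1$ handling isolated genes exactly as you flag. Your observation that the second matrix must be read as $I-\alpha D^{-1/2}WD^{-1/2}$ is also right --- with the printed ``$+$'' the off-diagonal entries are nonnegative and the matrix cannot be a $Z$-matrix, and the paper's own later use of the theorem (in the proof of Theorem \ref{TSPDM}, where it invokes the $M$-matrix property of $S_\alpha=I-J_\alpha$) confirms the sign is a typo. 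The similarity transformation $D^{-1/2}(D-\alpha W)D^{-1/2}$, or equivalently the test vector $D^{1/2}e$, correctly transfers the property to the scaled matrix.
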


They then implemented the classical Chebyshev iteration for the GeneRank problem. It is a polynomial scheme to accelerate the convergence of the stationary iterative method
\[
x^{(k+1)}=x^{(k)}+M^{-1}(b-Ax^{k}),\quad k=0,1,\ldots,
\]
to solve a linear system of equation $Ax=b$, in which $M$ is a nonsingular matrix. If the matrix  $I-M^{-1}A$ is similar to a symmetric matrix with eigenvalues lying in an interval $[l_{\min},l_{\max}]$, then the Chebyshev iteration to solve $Ax=b$ is given by (for more details, see  \cite{benzi,Golub})
\begin{eqnarray*}
y^{k+1} &=&\frac{\omega_{k+1}}{2-(l_{\min}+l_{\max})} \left\{2M^{-1}(b-Ay^{(k)})+[2-(l_{\min}+l_{\max})](y^{(k)}-y^{(k-1)}) \right\}\\
        & & +y^{(k-1)},\quad k=1,2,\ldots,
\end{eqnarray*}
where $y^{(0)}=x^{(0)}$, $y^{(1)}=x^{(1)}$ and
\[
\omega_{k+1}=\frac{1}{1-\frac{w_k^2}{4\omega^2}},\quad w_2=\frac{2\omega^2}{2\omega^2-1},\quad \omega_1=1,\quad \omega=\frac{2-(l_{\min}+l_{\max})}{l_{\max}-l_{\min}}.
\]
For the GeneRank problem, it is considered $l_{\min}=1-\alpha$, $l_{\max}=1+\alpha$, $A=D-\alpha W$, $b=ex$ and $M=D=\textrm{diag}(A)$.

Numerical results presented in \cite{benzi} show that the number of iterations of the method is usually more than those of the  CG method with the diagonal scaling. However, the cost per iteration is much lower and leads to faster convergence in terms of CPU time.

\section{Main results} \label{SEC3}

Wu et al. \cite{wu,wup} showed that coefficient matrix of linear system (\ref{spd}) is symmetric positive definite and therefore we can use the conjugate gradient method (\cite{cg}) to solve the system. As we know that the convergence rate of the CG method depends on the condition number of the matrix in question, or more generally the distribution of eigenvalues. If the eigenvalue distribution of the preconditioned system is better than that of the original one, the convergence will be accelerated dramatically. For this reason Wu and coworkers applied Jacobi preconditioner on the system (\ref{spd}) to accelerate in convergence of the method. In this case, the linear system (\ref{pcg}) were resulted. Considering (\ref{bound1}) and (\ref{bound2}), we can conclude that increasing $\alpha$ from 0 to 1 can cause an increase in the ratio of ${\lambda_{\max}}/{\lambda_{\min}}$ and therefore the coefficient matrix would be increasingly ill-conditioned and the rate of convergence of CG expected to decrease as $\alpha$ increases.

From now on, for the sake of the  simplicity, let  $J_{\alpha}=\alpha D^{-\frac{1}{2}} W D^{-\frac{1}{2}}$ and $S_{\alpha}=I-J_{\alpha}$. It is noted that the matrix $S_\alpha$ is the coefficient matrix of the system (\ref{pcg}). We now propose the preconditioner $M_{\alpha}=I+J_{\alpha}$ for the system (\ref{pcg}). In this case, the preconditioned system takes the form
\begin{equation}\label{preconme}
    M_{\alpha} S_{\alpha} \bar{x}=M_{\alpha}b_{\alpha},
\end{equation}
where $b_{\alpha}=(1-\alpha) D^{-\frac{1}{2}} ex$. In the sequel, we investigate the properties of the proposed preconditioner.

\begin{theorem}\label{TSPDM}
For every $0<\alpha < 1$, the matrix $M_{\alpha} S_{\alpha}$ is symmetric positive definite M-matrix.
\end{theorem}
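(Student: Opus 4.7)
The plan is to first exploit the fact that $M_\alpha$ and $S_\alpha$ commute, since both are polynomials in the common symmetric matrix $J_\alpha$. In particular,
\[
M_\alpha S_\alpha = (I+J_\alpha)(I-J_\alpha) = I - J_\alpha^2,
\]
so the symmetry of $M_\alpha S_\alpha$ is immediate from $J_\alpha^T = J_\alpha$ (which holds because $W$ is symmetric and $D$ is diagonal).

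For positive definiteness I would use the spectral bounds already established in (\ref{bound1}) and (\ref{bound2}). Those bounds say that the eigenvalues of $S_\alpha = I - J_\alpha$ lie in $[1-\alpha,\,1+\alpha]$, which is equivalent to saying the eigenvalues of the symmetric matrix $J_\alpha$ all lie in $[-\alpha,\alpha]$. Squaring, the eigenvalues of $J_\alpha^2$ lie in $[0,\alpha^2]$, and therefore the eigenvalues of $I-J_\alpha^2$ lie in $[1-\alpha^2,\,1]$. Since $0<\alpha<1$ gives $1-\alpha^2>0$, Theorem \ref{SPDthm} yields that $M_\alpha S_\alpha$ is SPD.

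For the $Z$-matrix property I would look directly at the entries. Writing
\[
J_\alpha^2 = \alpha^2 D^{-\frac12} W D^{-1} W D^{-\frac12},
\]
each factor on the right is entrywise nonnegative ($W\ge 0$ and $D^{-1}>0$ is diagonal), so $J_\alpha^2\ge 0$ entrywise. Hence the off-diagonal entries of $M_\alpha S_\alpha = I - J_\alpha^2$ are nonpositive, i.e., $M_\alpha S_\alpha$ is a $Z$-matrix in the sense of Definition \ref{Zmatrixdef}.

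Finally, to upgrade from $Z$-matrix to $M$-matrix I plan to invoke Theorem \ref{Mmatrixthm} and produce a strictly positive vector $x$ with $M_\alpha S_\alpha x >0$. By Theorem \ref{GeneRankMthm} (applied to $D-\alpha W$, and then rescaled by $D^{1/2}$, or applied directly), $S_\alpha$ is itself an $M$-matrix, so Theorem \ref{Mmatrixthm} supplies some $x>0$ with $y:=S_\alpha x > 0$. Then $M_\alpha S_\alpha x = M_\alpha y$, and since $M_\alpha = I + J_\alpha$ is entrywise nonnegative with strictly positive diagonal entries (the diagonal is $1$), the $i$-th component of $M_\alpha y$ is bounded below by $y_i>0$. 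Thus $M_\alpha S_\alpha x > 0$, and Theorem \ref{Mmatrixthm} finishes the proof. I expect the only subtle point to be the entrywise manipulation in this last step; the SPD half should be a clean consequence of the prior bounds.
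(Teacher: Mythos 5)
Your proposal is correct and follows essentially the same route as the paper: the factorization $M_\alpha S_\alpha = I - J_\alpha^2$, positive definiteness via the eigenvalue bounds (\ref{bound1})--(\ref{bound2}), and the $M$-matrix conclusion via Theorem \ref{Mmatrixthm} applied to a positive vector $x$ with $S_\alpha x>0$ supplied by Theorem \ref{GeneRankMthm}. The only difference is that you spell out two details the paper leaves implicit (the entrywise nonnegativity of $J_\alpha^2$ for the $Z$-matrix claim, and why $M_\alpha y>0$), which is a welcome addition rather than a deviation.
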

\begin{proof}
Clearly, we have
\[
T_{\alpha}:=M_{\alpha} S_{\alpha}=I-J_{\alpha}^2=I-(I-S_{\alpha})^2.
\]
Therefore,
\[
\lambda(T_{\alpha})=1-(1-\lambda(S_{\alpha}))^2.
\]
According to the relations  (\ref{bound1}) and (\ref{bound2}) we have $0<\lambda(S_{\alpha}) <2$. Hence, by the latter equation we deduce that $0<\lambda(T_{\alpha}) <1$. Now since $T_{\alpha}$ is a symmetric matrix with positive eigenvalues, it follows that $T_{\alpha}$ is symmetric positive definite. Obviously, $T_{\alpha}$ is a Z-matrix. From Theorem \ref{GeneRankMthm},  $S_\alpha$ is an $M$-matrix. Therefore, by Theorem \ref{Mmatrixthm}, there exists a positive vector $x$ such that $S_{\alpha}x>0$. Hence, $T_{\alpha}x=(I+J_{\alpha})S_{\alpha}x>0$. This shows that the matrix $T_{\alpha}$ is an M-matrix.
\end{proof}

\begin{lemma}\label{SmalleigSLem}
If $W\neq 0$, then $\lambda_{\min}(S_{\alpha})=1-\alpha$.
\end{lemma}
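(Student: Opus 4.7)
The plan is to invoke the lower bound $\lambda_{\min}(S_\alpha) \geq 1-\alpha$ already recorded in (\ref{bound2}) and then exhibit an explicit eigenvector that attains this bound. Equivalently, writing $S_\alpha = I - J_\alpha$, it suffices to show that the symmetric matrix $D^{-1/2}WD^{-1/2}$ has $1$ as an eigenvalue; the corresponding eigenvector of $J_\alpha$ will then give an eigenvector of $S_\alpha$ with eigenvalue $1-\alpha$.

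If every row of $W$ were non-zero, the natural candidate would be $v = D^{1/2}\mathbf{1}$, since $D^{-1}W$ would be row-stochastic and satisfy $D^{-1}W\mathbf{1} = \mathbf{1}$, giving $D^{-1/2}WD^{-1/2}v = D^{-1/2}W\mathbf{1} = D^{-1/2}D\mathbf{1} = v$. The complication, which will be the main obstacle, is that the gene graph may have isolated vertices, for which the definition (\ref{wdeg}) sets $d_i = 1$ even though the corresponding row of $W$ is zero. On those coordinates $D^{-1}W$ has zero rows, so $\mathbf{1}$ is no longer an eigenvector.

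To get around this, I would localize the construction to a single non-trivial connected component. Since $W\neq 0$ and $W=W^T$, the graph with adjacency matrix $W$ contains at least one connected component $C$ with $|C|\geq 2$; for every $l\in C$ we have $d_l = \deg_l \geq 1$. Define $v\in\mathbb{R}^n$ by
\begin{equation*}
v_l = \sqrt{d_l} \text{ if } l\in C, \qquad v_l = 0 \text{ otherwise.}
\end{equation*}
Then $v\neq 0$, and I would verify $D^{-1/2}WD^{-1/2}v = v$ by a direct componentwise computation: for $l\in C$ all neighbours of $l$ lie in $C$, so
\begin{equation*}
\bigl(D^{-1/2}WD^{-1/2}v\bigr)_l = \frac{1}{\sqrt{d_l}}\sum_{m\in C} W_{lm}\frac{\sqrt{d_m}}{\sqrt{d_m}} = \frac{d_l}{\sqrt{d_l}} = \sqrt{d_l} = v_l,
\end{equation*}
while for $l\notin C$ either $l$ is isolated or all its neighbours lie in another component disjoint from $C$, so $v$ vanishes on the support of the $l$-th row of $W$, giving $(D^{-1/2}WD^{-1/2}v)_l = 0 = v_l$.

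Consequently $J_\alpha v = \alpha v$ and $S_\alpha v = (1-\alpha)v$, so $\lambda_{\min}(S_\alpha)\leq 1-\alpha$. Combining with (\ref{bound2}) yields $\lambda_{\min}(S_\alpha) = 1-\alpha$, as required. The only subtle point throughout the argument is the careful treatment of the isolated vertices via the choice of $D$ in (\ref{wdeg}); once the eigenvector is confined to a non-trivial component, that issue disappears.
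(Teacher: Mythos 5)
Your proof is correct and follows essentially the same route as the paper: both arguments exhibit an explicit eigenvector of the form $D^{1/2}$ applied to an indicator vector supported on non-isolated vertices, verify that it satisfies $S_\alpha v=(1-\alpha)v$, and combine this with the lower bound (\ref{bound2}). The only difference is that the paper supports the indicator on \emph{all} vertices with $deg_i>0$ (checking $Wx=Dx$ globally), whereas you restrict to a single non-trivial connected component; this is a cosmetic variation, not a different method.
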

\begin{proof}
Let $x=(x_1,\ldots,x_n)^T$, such that
\begin{eqnarray*}
x_i=
\left\{
\begin{array}{ll}
1, &~~~ deg_i>0,\\
0, &~~~ \textrm{otherwise},
\end{array}
\right.
\end{eqnarray*}
for $i=1,2,\ldots,n$. Obviously, we have $Wx=Dx$. Now, assuming $y=D^{\frac{1}{2}}x\neq 0$, we get
\begin{eqnarray*}
(I- \alpha D^{-\frac{1}{2}}WD^{-\frac{1}{2}})y=(1-\alpha)y,
\end{eqnarray*}
which is equivalent to $S_{\alpha}y=(1-\alpha)y$.  Now by Eq. (\ref{bound2}), we see that  $1-\alpha$ is the smallest eigenvalue of $S_{\alpha}$.
\end{proof}

\begin{remark}
In the case that $W=0$, we have $S_\alpha=I$ and there is nothing  to investigate.
\end{remark}

\begin{theorem}
\label{p2}
The spectral condition number of $T_{\alpha}$ is not greater than that of the matrix $S_{\alpha}$, i.e.,
\[
Cond_{2}(T_{\alpha}) \leq Cond_{2}(S_{\alpha}).
\]
\end{theorem}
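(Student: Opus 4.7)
The plan is to express the eigenvalues of $T_\alpha$ as a polynomial in the eigenvalues of $S_\alpha$ and then compare the ratios directly. Because $T_\alpha=I-J_\alpha^2=(I-J_\alpha)(I+J_\alpha)$, the matrices $S_\alpha$ and $T_\alpha$ share an orthonormal eigenbasis. If $\mu_1\le\mu_2\le\cdots\le\mu_n$ are the eigenvalues of $S_\alpha$, then the eigenvalues of $T_\alpha$ are $f(\mu_i):=\mu_i(2-\mu_i)$. By Lemma~\ref{SmalleigSLem} we know $\mu_1=1-\alpha$, and by (\ref{bound1}) we have $\mu_n\le 1+\alpha$, so the whole spectrum of $S_\alpha$ lies in $[1-\alpha,1+\alpha]\subset(0,2)$.

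First I would identify $\lambda_{\min}(T_\alpha)$. A short algebraic manipulation gives $f(\mu)-f(1-\alpha)=\alpha^2-(\mu-1)^2$, which is nonnegative precisely on $[1-\alpha,1+\alpha]$. Hence $f(\mu_i)\ge f(\mu_1)=(1-\alpha)(1+\alpha)=1-\alpha^2$ for every $i$, and this lower bound is attained at $i=1$. Therefore $\lambda_{\min}(T_\alpha)=f(\mu_1)=\mu_1(2-\mu_1)$.

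Next I would compare condition numbers index-by-index. For each $i$,
\begin{equation*}
\frac{f(\mu_i)}{f(\mu_1)}=\frac{\mu_i(2-\mu_i)}{\mu_1(2-\mu_1)}=\frac{\mu_i}{\mu_1}\cdot\frac{2-\mu_i}{2-\mu_1}\le\frac{\mu_i}{\mu_1},
\end{equation*}
because $\mu_i\ge\mu_1$ forces $2-\mu_i\le 2-\mu_1$, and both factors $2-\mu_i,\,2-\mu_1$ are positive since the spectrum lies in $(0,2)$. Taking the maximum over $i$ yields $\lambda_{\max}(T_\alpha)/\lambda_{\min}(T_\alpha)\le\mu_n/\mu_1$, which is exactly $\mathrm{Cond}_{2}(T_\alpha)\le\mathrm{Cond}_{2}(S_\alpha)$.

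The only subtle point is verifying that the minimum eigenvalue of $T_\alpha$ is indeed $f(\mu_1)$ rather than $f(\mu_n)$; this is where the symmetry of $f$ about $\mu=1$ combined with the two-sided bound $[1-\alpha,1+\alpha]$ and the explicit value $\mu_1=1-\alpha$ from Lemma~\ref{SmalleigSLem} does all the work. Everything else is a term-by-term estimate that exploits monotonicity of $2-\mu$.
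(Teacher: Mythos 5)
Your proof is correct, and it reaches the same conclusion by the same underlying mechanism as the paper --- the spectral mapping $\lambda(T_{\alpha})=\lambda(S_{\alpha})\bigl(2-\lambda(S_{\alpha})\bigr)=1-(1-\lambda(S_{\alpha}))^2$ combined with Lemma~\ref{SmalleigSLem} and the bound (\ref{bound1}) --- but the execution is genuinely different in one respect. The paper splits into the two cases $\lambda_{\max}(S_{\alpha})\geq 1$ and $\lambda_{\max}(S_{\alpha})\leq 1$: in the first case it uses the crude bound $\lambda_{\max}(T_{\alpha})\leq 1$ (the global maximum of the mapping) and checks $1/\bigl(1-(1-\lambda_{\min}(S_{\alpha}))^2\bigr)\leq \lambda_{\max}(S_{\alpha})/\lambda_{\min}(S_{\alpha})$, while in the second case the mapping is monotone on the spectrum and a direct ratio comparison works. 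Your factorization
\[
\frac{f(\mu_i)}{f(\mu_1)}=\frac{\mu_i}{\mu_1}\cdot\frac{2-\mu_i}{2-\mu_1}\leq\frac{\mu_i}{\mu_1}
\]
treats every eigenvalue uniformly and makes the case distinction unnecessary, which is a small but real simplification; it also forces you to be explicit that $\lambda_{\min}(T_{\alpha})=f(\mu_1)$ must be verified (via the identity $f(\mu)-f(1-\alpha)=\alpha^2-(\mu-1)^2\geq 0$ on $[1-\alpha,1+\alpha]$) before the ratios can be identified with condition numbers --- a point that is present in the paper's argument but distributed across the two cases. Both proofs depend on the same two external inputs: the exact value $\lambda_{\min}(S_{\alpha})=1-\alpha$ from Lemma~\ref{SmalleigSLem} (with the $W=0$ case dismissed separately) and the upper bound $\lambda_{\max}(S_{\alpha})\leq 1+\alpha$.
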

\begin{proof}
Since both of the matrices $S_{\alpha}$ and $T_{\alpha}$ are symmetric,  it is enough to prove that
\[
\frac{ \lambda_{\max}{(T_{\alpha})}  }{ \lambda_{\min}{(T_{\alpha}} )}\leq \frac{ \lambda_{\max}{(S_{\alpha})}  }{ \lambda_{\min}{(S_{\alpha}} )}.
\]
For every eigenvalue $\lambda(S_{\alpha})$ of $S_{\alpha}$, we have  $\lambda_{\min}(S_{\alpha}) \leq \lambda(S_{\alpha}) \leq \lambda_{\max}(S_{\alpha})$.
Then,
\begin{eqnarray*}
\label{p21}
1-\lambda_{\max}(S_{\alpha})\leq 1-\lambda(S_{\alpha})\leq 1-\lambda_{\min}(S_{\alpha}).
\end{eqnarray*}
From Lemma \ref{SmalleigSLem}, we have $\lambda_{\min}(S_{\alpha}) \leq 1$. We now consider two cases, $\lambda_{\max}(S_{\alpha}) \geq 1$ and  $\lambda_{\max}(S_{\alpha}) < 1$. If $\lambda_{\max}(S_{\alpha}) \geq 1$, then by Eq. (\ref{bound1}) and Lemma \ref{SmalleigSLem} we have
$\lambda_{\max}(S_{\alpha})-1 \leq 1-\lambda_{\min}(S_{\alpha})$.
Therefore $1-(1-\lambda_{\max}(S_{\alpha}))^2 \geq 1-(1-\lambda_{\min}(S_{\alpha}))^2$ and since the maximum value of $1-(1-\lambda(S_{\alpha}))^2$ is equal to 1, we get
\begin{eqnarray*}
Cond(T_{\alpha})\leq \frac{1}{1-(1-\lambda_{\min}(S_{\alpha}))^2}\leq \frac{\lambda_{\max}(S_{\alpha})}{\lambda_{\min}(S_{\alpha})}=Cond(S_{\alpha})
\end{eqnarray*}
Now, suppose that $\lambda_{\max}(S_{\alpha})\leq 1$. In this case, it is easy to see that
\begin{eqnarray*}
Cond(T_{\alpha})=\frac{1-(1-\lambda_{\max}(S_{\alpha}))^2}{1-(1-\lambda_{\min}(S_{\alpha}))^2}\leq \frac{\lambda_{\max}(S_{\alpha})}{\lambda_{\min}(S_{\alpha})}=Cond(S_{\alpha})
\end{eqnarray*}
Therefore, the proof of theorem is completed.
\end{proof}

This theorem shows that the eigenvalues of the matrix $T_{\alpha}$ are clustered at least as the matrix $S_{\alpha}$.
As we shall see, for the presented numerical experiments the eigenvalues of $T_{\alpha}$ are more clustered than those of the matrix $S_{\alpha}$.

\begin{remark}
From Lemma \ref{SmalleigSLem}, if $W\neq 0$, then $\lambda_{\min}(S_{\alpha})=1-\alpha$. Therefore, according to the relation $\lambda(T_{\alpha})=1-(1-\lambda(S_{\alpha}))^2$ we have $\lambda_{\min}(T_{\alpha})=1-\alpha^2$.
Having in mind that $0<\lambda(T_{\alpha}) <1$, we deduce that $1-\alpha^2\leq \lambda(T_{\alpha}) <1$. Note that $1-\alpha\leq \lambda(S_{\alpha}) \leq 1+\alpha$.
\end{remark}

\section{Numerical experiments}\label{SEC4}

As we mentioned,  Wu et al. in \cite{wup} successfully employed the Jacobi preconditioner together with the CG algorithm for the solution of the linear system. They have deduced that the  CG method in conjunction with the Jacobi preconditioner was faster among the tested methods for each of the presented examples. In this section we compared the numerical results of our preconditioner with those of the Jacobi preconditioner.

All the numerical experiments presented in this section were computed in double precision and the algorithms are implemented in MATLAB 7.12.0 and tested on a 64-bit 1.73 GHz intel Q740 core i7 processor and 4GB RAM running windows 7. We use the stopping criteria based on the 1-norm of the residual. That is, we stop iterating as soon as $\| r \|_1 < tol$, where $tol$ is a given tolerance. The initial guess is the null  vector. We use two different choices for $ex$: $ex=(\frac{1}{n})e$, where $e$ is the vector of all ones, and $ex = p$, where $p$ is a randomly chosen probability vector --that is, a random vector with entries in $(0,1)$. For each adjacency matrix, we use four different values of $\alpha$ to form the corresponding GeneRank matrices $D - \alpha W$: $\alpha= 0.5, 0.75, 0.80, 0.99$. The obtained numerical results are presented in some tables. In all the tables, ``CG", ``PCG", ``Chebyshev" and ``CG-$M_{\alpha}$" are, respectively, denoted for the CG method implemented for the system (\ref{spd}), CG method applied to the system (\ref{pcg}), Chebyshev iteration and the CG algorithm to the system (\ref{spd}) in conjunction with the preconditioner $M_{\alpha}$.

\begin{example}\rm
\label{w}
In this example there are three adjacency matrices,  \textit{w\_All} which is of size $4, 047 \times 4, 047$, with $339, 596$ nonzero entries, \textit{w\_Up} which is of size $2, 392 \times 2, 392$, with $128, 468$ nonzero entries and \textit{w\_Down} which is of size $1, 625 \times 1, 625$ with $67, 252$ nonzero entries and three expression change vectors \textit{expr\_data}, \textit{expr\_dataUp} and \textit{expr\_dataDown}. These matrices were constructed using the all three sections of the GO, where a link is presented between two genes if they share a GO annotation. Only genes which are up-regulated are included in \textit{w\_Up} and only down-regulated in \textit{w\_Down} \cite{morison}. The data files are available from \cite{wall}. The results for the these matrices are given in Tables 4.1, 4.2 and 4.3.  In these tables (hereafter for other tables) the number of iterations for the convergence  together with the CPU time (in parenthesis) in seconds are given. Here we mention that, in this example, the tolerance $tol$ is taken to be $10^{-14}$. As the numerical experiments show our preconditioner is more effective than the Jacobi preconditioner. It seems that the number of iterations of our method is about half of those of the CG method in conjunction with the Jacobi preconditioner. In addition, as seen our preconditioner outperforms in terms of both number of iterations and CPU times.

For more investigation, in Figures 4.1, 4.2 and 4.3  we depict the eigenvalues distribution of  $S_\alpha$ and $T_\alpha$ for test matrices  $w\_Down$ when $\alpha = 0.5$, $w\_Up$ when $\alpha=0.75$ and $w\_All$ when $\alpha=0.9$, respectively. As seen the eigenvalues of the $T_\alpha$ are more clustered than those of $S_{\alpha}$ for all the three  test matrices.

\begin{figure}
{\includegraphics[height=9cm,width=13cm]{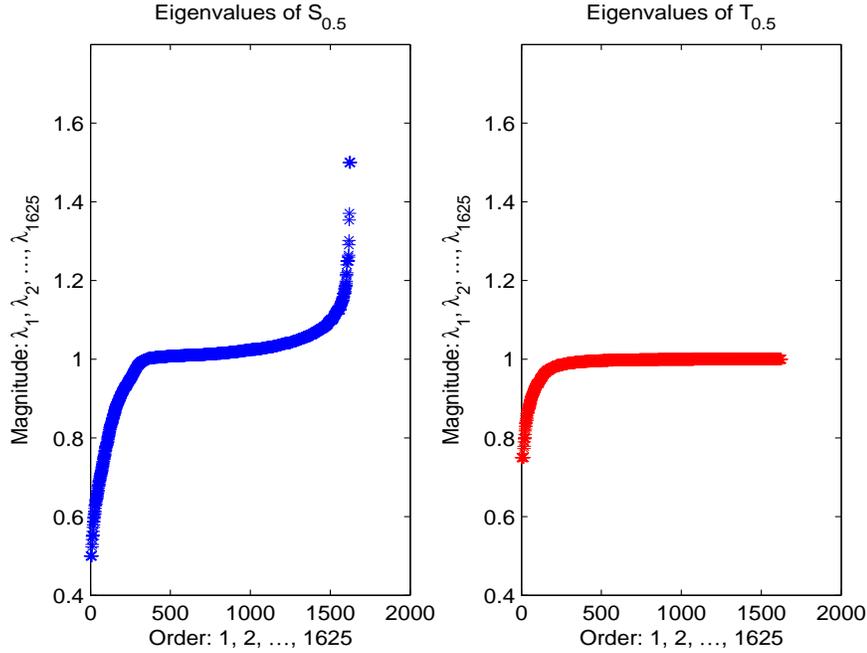}}
{\caption{Eigenvalue distribution of $S_{0.5}$ and $T_{0.5}$ for the matrix $w\_Down$.}}\label{Fig1}\vspace{1cm}
\end{figure}
\begin{figure}
\centerline{\includegraphics[height=9cm,width=13cm]{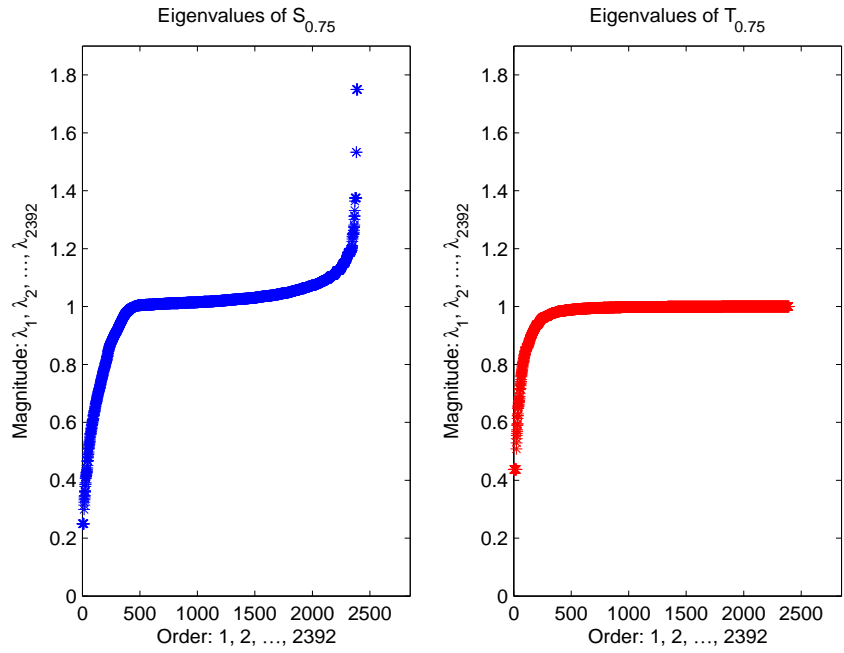}}
{\caption{Eigenvalue distribution of $S_{0.75}$ and $T_{0.75}$ for the matrix $w\_Up$.}}\label{Fig2}\vspace{1cm}
\end{figure}
\begin{figure}
\centerline{\includegraphics[height=9cm,width=13cm]{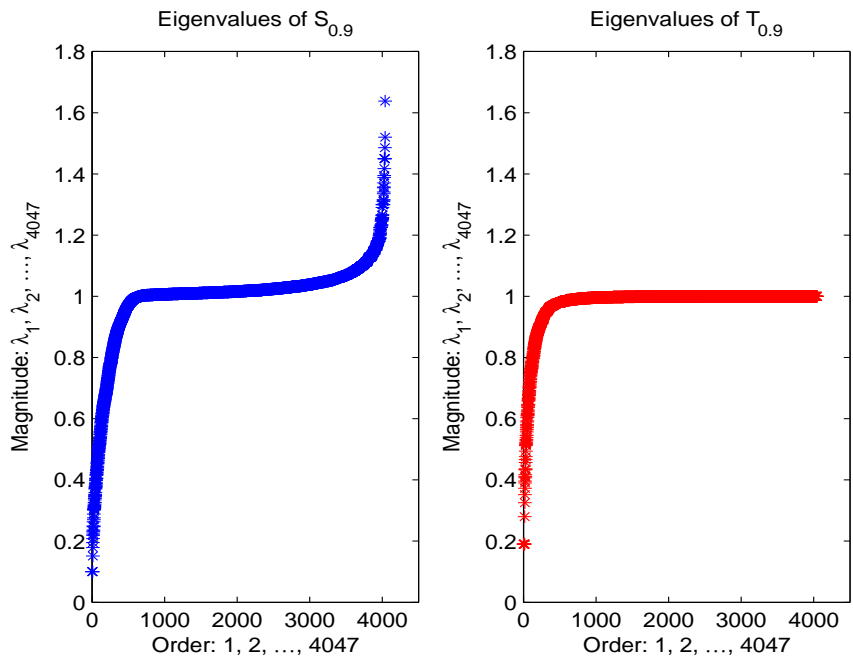}}
{\caption{Eigenvalue distribution of $S_{0.9}$ and $T_{0.9}$ for the matrix $w\_All$.}}\label{Fig3}
\end{figure}
\end{example}

\begin{table}\label{Table11}
\caption{Results for the $w\_All$ matrix. Here $ex=extr\_data$.}
\vspace{0.cm}
\begin{tabular}{lrrrrrr}\hline
$\alpha$ ~~~~~~ &~~~~~~ 0.50 ~~~~~&~~~~~~ 0.75 ~~~~~&~~~~~~ 0.80 ~~~~~&~~~~~~ 0.99~~~~~ \\\hline
CG                &  $381(0.700)$  & $441(0.772)$ & $456(0.785)$  & $603(1.012)$ \\
PCG               &  $26(0.051)$   & $39(0.082)$  & $42(0.081)$   & $69(0.133)$ \\
Chebyshev         & $23(0.030)$  & $36(0.048)$  & $41(0.057)$  & $177(0.195)$\\
CG-$M_{\alpha}$   & $11(0.023)$  & $16(0.037)$  & $18(0.044)$  & $30(0.064)$\\\hline
\end{tabular}
\end{table}

\begin{table}\label{Table21}
\caption{Results for the $w\_Up$ matrix. Here $ex=extr\_dataUp$.}
\vspace{-0.cm}
\begin{tabular}{lrrrrrr}\hline
$\alpha$ ~~~~~~ &~~~~~~ 0.50 ~~~~~&~~~~~~ 0.75 ~~~~~&~~~~~~ 0.80 ~~~~~&~~~~~~ 0.99~~~~~~ \\\hline
CG                &  $309(0.142)$  & $360(0.162)$ & $377(0.173)$   & $488(0.221)$ \\
PCG               &  $26(0.017)$ & $39(0.026)$ & $42(0.022)$   & $70(0.033)$ \\
Chebyshev         & $22(0.008)$  & $36(0.013)$  & $41(0.014)$  & $177(0.058)$\\
CG-$M_{\alpha}$   & $11(0.006)$  & $16(0.011)$  & $18(0.012)$  & $31(0.017)$\\\hline
\end{tabular}
\end{table}

\begin{table}\label{Table31}
\caption{Results for the $w\_Down$ matrix. Here $ex=extr\_dataDown$. }
\vspace{-0.cm}
\begin{tabular}{lrrrrrr}\hline
$\alpha$ ~~~~~~ &~~~~~~ 0.50 ~~~~~~&~~~~~~ 0.75 ~~~~~~&~~~~~~ 0.80 ~~~~~~&~~~~~~ 0.99~~~~~~ \\\hline
CG                &  $267(0.087)$  & $310(0.091)$ & $322(0.094)$  & $427(0.131)$ \\
PCG               &  $27(0.010)$   & $40(0.013)$  & $44(0.014)$   & $76(0.024)$ \\
Chebyshev         & $22(0.007)$  & $35(0.008)$  & $40(0.009)$  & $173(0.035)$\\
CG-$M_{\alpha}$   & $11(0.006)$  & $17(0.006)$  & $18(0.006)$  & $32(0.013)$\\\hline
\end{tabular} \end{table}

\begin{example} \rm
In this example, we use two different types of test data for our experiments. The first matrix is a SNPa adjacency matrix (single-nucleotide polymorphism matrix). This matrix has $n = 152,520$ rows and columns, and is very sparse with only 639,248 nonzero entries. The second type is RENGA adjacency matrix (range-dependent random graph model). In our experiments we set $\lambda = 0.9$ and $\beta = 1$, the default values in RENGA. Both of these types of matrices are tested in \cite{wup}. The results for the SNPa matrix are given in Tables 4.4 and 4.5, and the results for the RENGA matrix with $n = 100, 000$ and $n=500, 000$ are given in Tables 4.6 and 4.7. In this example the tolerance $tol$ is assumed to be $10^{-10}$. As the numerical experiments show the proposed preconditioner is more effective tahn the diagonal scaling and Chebyshev iteration method.
\end{example}

\begin{table}\label{Table12}
\caption{Results for the SNPa matrix. Here $ex = (\frac{1}{n})e$, where $e$ is the vector of all ones. }
\vspace{-0.cm}
\begin{tabular}{lrrrrrr}\hline
$\alpha$ ~~~~~~   &~~~~~~~~ 0.50 ~~~~&~~~~~~~~ 0.75 ~~~~&~~~~~~~~ 0.80 ~~~~&~~~~~~~~ 0.99~~~~ \\\hline
CG                & $86(2.420)$  & $116(3.321)$ & $128(3.663)$ & $469(13.321)$\\
PCG               & $17(0.514)$  & $27(0.818)$  & $30(0.922)$  & $91(2.738)$\\
Chebyshev         & $17(0.208)$  & $28(0.359)$  & $31(0.384)$  & $130(1.402)$\\
CG-$M_{\alpha}$   & $9(0.164)$  & $13(0.213)$  & $15(0.236)$  & $44(0.704)$\\\hline
\end{tabular} \end{table}
\begin{table}\label{Table22}
\caption{Results for the SNPa matrix. Here $ex = p$, where $p$ is a random   probability vector.  }
\vspace{-0.cm}
\begin{tabular}{lrrrrrr}\hline
$\alpha$ ~~~~~~ &~~~~~~~~ 0.50 ~~~~&~~~~~~ 0.75 ~~~~&~~~~~~ 0.80 ~~~~&~~~~~~ 0.99~~~~ \\\hline
CG                & $127(3.649)$  & $174(5.011)$ & $193(5.607)$ & $721(20.411)$\\
PCG               & $26(0.789)$  & $41(1.242)$  & $46(1.375)$  & $139(4.203)$\\
Chebyshev         & $17(0.211)$  & $27(0.354)$  & $30(0.352)$  & $125(1.368)$\\
CG-$M_{\alpha}$   & $8(0.124)$  & $13(0.209)$  & $14(0.232)$  & $43(0.713)$\\\hline
\end{tabular}
\end{table}
\begin{table}\label{Table32}
\caption{Results for the RENGA matrices.  Here $ex = (\frac{1}{n})e$, where $e$ is the vector of all ones.}
\vspace{-0.cm}
\begin{tabular}{lrrrrrr}\hline
~~~~~~~~& ~~~~~~~~& $n=100, 000$ & ~~~~~~\\\hline
$\alpha$ ~~~~~~~~ &~~~~~~ 0.50 ~~~~&~~~~~~ 0.75 ~~~~&~~~~~~ 0.80 ~~~~&~~~~~~ 0.99~~~~ \\\hline
CG                &  $43(0.789)$ & $47(0.840)$ & $48(0.863)$   & $129(2.305)$ \\
PCG               &  $14(0.262)$ & $22(0.431)$ & $24(0.459)$   & $95(1.824)$ \\
Chebyshev         & $17(0.183)$  & $27(0.245)$  & $31(0.259)$  & $127(0.905)$\\
CG-$M_{\alpha}$   &  $8(0.092)$ & $12(0.141)$ & $14(0.168)$    & $53(0.673)$ \\\hline
~~~~~~~~& ~~~~~~~~& $n=500, 000$& ~~~~~~~\\\hline
CG                &  $23(2.612)$ & $27(3.006)$ & $30(3.398)$   & $106(12.019)$ \\
PCG               &  $13(1.578)$ & $20(2.453)$ & $22(2.661)$   & $86(10.497)$ \\
Chebyshev         & $17(0.901)$  & $27(1.371)$  & $30(1.512)$  & $125(6.359)$\\
CG-$M_{\alpha}$   &  $7(0.645)$ & $12(1.100)$ & $14(1.234)$    & $50(4.671)$ \\\hline
\end{tabular}
\end{table}
\begin{table}\label{Table42}
\caption{Results for the RENGA matrices. Here $ex = p$, where $p$ is a \newline random probability vector. }
\vspace{-0.cm}
\begin{tabular}{lrrrrrr}\hline
~~~~~~~~& ~~~~~~~~& $n=100, 000$ & ~~~~~~\\\hline
$\alpha$ ~~~~~~~~ &~~~~~~ 0.50 ~~~~&~~~~~~ 0.75 ~~~~&~~~~~~ 0.80 ~~~~&~~~~~~ 0.99\\\hline
CG                &  $68(1.254)$  & $73(1.299)$ & $75(1.337)$   & $222(4.021)$ \\
PCG               &  $22(0.418)$ & $34(0.667)$ & $39(0.732)$   & $165(3.286)$ \\
Chebyshev         & $17(0.167)$  & $26(0.255)$  & $30(0.269)$  & $122(0.883)$\\
CG-$M_{\alpha}$   &  $8(0.093)$ & $12(0.145)$ & $14(0.170)$    & $53(0.668)$ \\\hline
~~~~~~~~& ~~~~~~~~& $n=500, 000$& ~~~~~~~\\\hline
CG                &  $36(4.101)$ & $45(5.109)$ & $50(5.637)$   & $200(22.676)$ \\
PCG               &  $21(2.509)$ & $34(4.168)$ & $38(4.585)$   & $163(19.841)$ \\
Chebyshev         & $16(0.838)$  & $26(1.360)$  & $29(1.455)$  & $120(6.062)$\\
CG-$M_{\alpha}$   &  $8(0.733)$ & $12(1.094)$ & $13(1.174)$    & $51(4.761)$ \\\hline
\end{tabular}
\end{table}

\section{Conclusion}\label{SEC5}

In this paper,  a preconditioner has been presented for the GeneRank problem.  Then some of its properties have been given. Finally some numerical experiments have been presented to show the effectiveness of the proposed preconditioner. As seen it does not need any CPU time to set up the preconditioner and the preconditioner is  explicitly in hand.
Our numerical results show that the proposed preconditioner is more effective than the ones presented in the literature.

\section*{Acknowledgements}

 We would like to thank Prof. Yimin Wei from  Fudan University for
providing us the SNPa data and Prof. Michele Benzi from Emory University to provide us the code of Chebyshv acceleration method.

\end{document}